\documentclass[pdflatex,sn-mathphys-num]{sn-jnl}

\usepackage{graphicx}%
\usepackage{subcaption}
\usepackage{multirow}%
\usepackage{amsmath,amssymb,amsfonts,eqnarray,cite}%
\usepackage{amsthm}%
\usepackage{mathrsfs}%
\usepackage[title]{appendix}%
\usepackage{xcolor}%
\usepackage{textcomp}%
\usepackage{manyfoot}%

\theoremstyle{thmstyleone}%
\newtheorem{theorem}{Theorem}

\newtheorem{corollary}[theorem]{Corollary}

\theoremstyle{thmstyletwo}%
\newtheorem{remark}{Remark}%

\theoremstyle{thmstylethree}%
\newtheorem{definition}{Definition}%

\begin{document}

\title{Certain families of series involving Hurwitz and Lerch zeta functions}

\author[1]{\fnm{Dora} \sur{Pokaz}}\email{dora.pokaz@grad.unizg.hr}

\author[2]{\fnm{Mihaela Ribi\v{c}i\'{c}} \sur{Penava}}\email{mihaela@mathos.hr}
\author[3]{\fnm{Yilmaz} \sur{Simsek}}\email{ysimsek@akdeniz.edu.tr}


\affil[1]{\orgdiv{Faculty of Civil Engineering}, \orgname{University of Zagreb}, \country{Zagreb, Croatia}}
\affil[2]{\orgdiv{School of Applied Mathematics and Informatics}, \orgname{Josip Juraj Strossmayer University of Osijek}, \country{Osijek, Croatia}}
\affil[3]{\orgdiv{Department of Mathematics}, \orgname{Faculty of Science, University of Akdeniz, TR-07058}, \country{Antalya, Turkey}}


\abstract{This paper deals with a certain family of series which is derived from generating functions of the Frobenius-Euler and Apostol type polynomials and numbers.  When special cases of this series are examined, it is shown that it generates some well-known families of zeta functions, such as the Hurwitz zeta and the Lerch zeta, etc.  We also give an open problem for these relation. We give  analytic continuation of this function. By using the Cauchy Residue
	Theorem,  We show that this series interpolates the Frobenius-Euler type polynomials at negative integers. Moreover, by
applying the Laplace transform to the generating function for these
polynomials, we get infinite series representations involving the Lerch zeta 
function and these polynomials. In addition, We show that this interpolation function can be representation in terms of the Lerch zeta
function and the Hurwitz zeta function. Some special values for the results are also given.}

\keywords{Bernoulli numbers and polynomials,  generating function, Stirling
numbers, Frobenius-Euler type Simsek numbers and polynomials, Riemann
zeta function, Hurwitz zeta function,  Lerch zeta function}


\pacs[MSC Classification]{05A15; 05C30; 11B68; 11M35; 26C05;  44A10}

\maketitle

\section{Introduction}

The history of series, and especially the zeta function, dates back a long time. Perhaps its most magnificent development began with the summation of power series in the 18th century and later reached its peak with the construction of prime number theory in the 19th century. In my opinion, it is well known that the Swiss mathematician Jakob Bernoulli (1654-1705), who dedicated his life to this field, invented the numbers bearing his surname, namely Bernoulli numbers, and that his famous student and fellow Swiss mathematician Leonhard Euler related them to the zeta function. This function achieved its present-day fame thanks to the famous German mathematician Bernhard Riemann, who laid its foundations in the complex plane. Before Riemann, such functions were only worked with in the real part, but Riemann extended both the real and imaginary parts, defining this zeta function for complex numbers. Today, this function is known as the Riemann zeta function. The fundamental spirit of this function is that, thanks to its analytic continuity, it offers vital mathematical applications and uses in other fields as well. Revolutionary developments in number theory have occurred thanks to this function. Another important area is its relationship with the Riemann hypothesis. It is also worth highlighting the impact of the zeta function in computer science, particularly in areas such as cryptography and the analysis of random matrices, which are among the most important topics in number theory. Beyond mathematics, we see its applications in fundamental areas of physics and engineering, including quantum mechanics  (\textit{cf}. 
\cite{Barnes,knopp,Knuf,SrivatavaChoi,Tichmarsh}). The Riemann zeta
function, whose definition is given by%
\begin{equation*}
	\zeta (w)=\sum\limits_{v=1}^{\infty }\frac{1}{v^{w}},
\end{equation*}%
where $w\in \mathbb{C}$, set of complex numbers, with $Re(w)>1$ (\textit{cf}. \cite{Barnes,SrivatavaChoi,Tichmarsh}). In this paper, we assume that $\ln w$ denotes the principal branch of the many-valued function $\ln w$ with the imaginary part  $ Im(w))$ constrained by
$\pi <Im(w)\leq \pi$.

The other well-known zeta function is the following Hurwitz zeta function, which is known as a generalization of the Riemann zeta
function:
\begin{equation*}
	\zeta (w,u)=\sum\limits_{v=1}^{\infty }\frac{1}{(u+v)^{w}},
\end{equation*}%
where $u\notin \mathbb{Z}_{0}^{-}=\left\{ \ldots ,-2,-1,0\right\} $ and $%
w\in \mathbb{C}$ with $Re(w)>1$ (\textit{cf}. \cite{Barnes,Batemen,Bayad,SrivatavaChoi}).

It is clear that $$\zeta (w):=\zeta (w,1)$$ (\textit{cf.} \cite{Batemen,SrivatavaChoi}). There are various generalizations of the Riemann zeta
function and the Hurwitz zeta function. One of these is the following: the Lerch transcendent (the Lerch zeta function) $\Phi (w;z,x)$:
\begin{equation}
	\Phi (w;z,x)=\sum_{l=0}^{\infty }\frac{z^{l}}{\left( l+x\right) ^{w}},
	\label{LZ}
\end{equation}%
where $x\notin \mathbb{Z}_{0}^{-}$; $w\in \mathbb{C}$ when $\left\vert
z\right\vert <1$; and $Re(w)>1$ when $\left\vert z\right\vert =1$ (\textit{cf%
}. \cite{Barnes,Batemen,Bayad,TkimJKMS,SrivatavaChoi}). The function $\Phi (w;z,x)$ was first
studied by Lipschitz (1832--1903) and later by Lerch (1860--1922). This
function contains the following special functions:

The Riemann function:%
\begin{equation*}
	\zeta (w):=\Phi (w;1,1)
\end{equation*}%
(\textit{cf}. \cite{SrivatavaChoi}). The Hurwitz (or generalized) zeta
functions:%
\begin{equation*}
	\zeta (w,x):=\Phi (w;1,x)
\end{equation*}%
(\textit{cf}. \cite{SrivatavaChoi}). The Polylogarithm function can also be
derived from (\ref{LZ}), that is,
\begin{equation*}
	\mathrm{Li}_{w}(z):=z\Phi (w;z,1)
\end{equation*}%
(\textit{cf}. \cite{Barnes,Batemen,Bayad,SrivatavaChoi}).

It is well-known that there are various types of families of series involving zeta functions. Special cases of these types of zeta functions are related to the Riemann zeta function.
These types of functions have important applications in many different areas,
especially in analytic number theory and mathematical physics (\textit{cf}. \cite{Barnes,Batemen,Bayad,SimsekMTJPAM2021,SrivatavaChoi}).

The main motivation of this paper is to not only define a certain family of series involving the Hurwitz zeta function, the Lerch type zeta functions, but also give some fundamental properties of this series. 

We define the following family of series involving the Hurwitz zeta functions and the Lerch zeta function:

\begin{definition}
	Let $s\in \mathbb{C}$ and $v\in \mathbb{N}$ with $v\geq 3$. We define $ 	
	Z(s;x,v)$ by%
	\begin{eqnarray}
	Z(s;x,v)&=&	-\frac{s(s+1)\cdots(s+v-1)}{(v-1)!}\sum\limits_{m=0}^{\infty }\sum\limits_{k_{v-2}=0}^{m}\cdots
		\sum\limits_{k_{3}=0}^{k_{4}}\sum\limits_{k_{2}=0}^{k_{3}}\notag\\&&\times\sum%
		\limits_{k_{1}=0}^{k_{2}}\frac{1}{
			1^{k_{1}}2^{k_{2}-k_{1}}3^{k_{3}-k_{2}}4^{k_{4}-k_{3}}%
			\cdots (v-1)^{m-k_{v-2}}\left( x-1+m\right) ^{s+v}}	,		
	 \label{DM-1}
	\end{eqnarray}
		where $x-1\notin \mathbb{Z}_{0}^{-}$ and $Re(s+v)>1$; for $v=2$, we have
	\begin{equation*}
		Z(s;x,2)=-s(s+1)\zeta (s+2,x-1),
			\end{equation*}
	where $x-1\notin \mathbb{Z}_{0}^{-}$ and $Re(s+2)>1$.
\end{definition}

We summarize the results of this paper as follows:

In Section \ref{pr}, some standard notations and definitions are given.

In Section \ref{S1}, by using the finite geometric sum formula, some special values of the series $Z(s;x,v)$ in terms of the Lerch zeta function $\Phi (s;z,x)$ and
the Hurwitz zeta function $\zeta (s,x)$ can be given. We give an open problem.

In Section \ref{S2}, We show that the function $Z(s;x,v)$ have analytic
continuations on the complex $s$-plane with the aid of integral representation of the function $Z(s;x,v)$. We also show that the function $Z(s;x,v)$ interpolation function for the polynomials $\ell _{n}(x;v)$ by using integral representation of the function $Z(s;x,v)$ and the Cauchy Residue Theorem at negative integers.

In Section \ref{S3}, applying the Laplace
transform to the generating function, we derive infinite series
representations for the Frobenius-Euler type Simsek polynomials and the Lerch zeta function. We also give some formulas involving the series $Z(s;x,v)$ in terms of the Lerch zeta function $\Phi (s;z,x)$ and
the Hurwitz zeta function $\zeta (s,x)$. 

\section{Preliminaries} \label{pr}

We start this section with the following standard notations and definitions in \cite{AgyuzFilomat}--\cite{Tichmarsh}:

Let $\mathbb{N}$, $\mathbb{Z}$, $\mathbb{Q}$, and $\mathbb{R}$ denote the
set of positive integers, the set of integers, the set of rational numbers,
and the set of real numbers, respectively. Let $\mathbb{N}_{0}=\mathbb{N}%
\cup \left\{ 0\right\} $.

The Bernoulli polynomials $B_{k}(y)$ are defined by%
\begin{equation}
	F_{B}(w,y)=\frac{w}{e^{w}-1}e^{wy}=\sum\limits_{k=0}^{\infty }\frac{B_{k}(y)%
	}{k!}w^{k},  \label{B.h}
\end{equation}%
where $\left\vert w\right\vert <2\pi $ (\textit{cf}. \cite{AgyuzFilomat,Apostol,simsekRJMP.mtjpam,SimsekBook,SrivatavaChoi}).

The Apostol-Bernoulli polynomials $\mathcal{B}_{k}(y;\beta )$ are defined by%
\begin{equation*}
	\frac{w}{\beta e^{w}-1}e^{wy}=\sum\limits_{k=0}^{\infty }\frac{\mathcal{B}%
		_{k}(y;\beta )}{k!}w^{k}
\end{equation*}%
(\textit{cf}. \cite{Apostol}).

The Stirling numbers of the first kind $S_{1}(v,d)$ are defined by%
\begin{equation}
	\left( \log (1+w)\right) ^{d}=\sum_{v=0}^{\infty }\frac{d!S_{1}(v,d)}{v!}%
	w^{v}.  \label{S1}
\end{equation}%
Using (\ref{S1}), we have 
\begin{equation*}
	S_{1}(v,d)=0
\end{equation*}%
if $d>v$ and%
\begin{equation}
	y_{(v)}=\sum\limits_{d=0}^{v}S_{1}(v,d)y^{d}  \label{S1fal}
\end{equation}%
(\textit{cf}. \cite{SimsekBook,SrivatavaChoi}).

The Stirling numbers of the second kind $S_{2}(v,d)$ are defined by%
\begin{equation}
	\left( e^{w}-1\right) ^{d}=\sum_{v=0}^{\infty }\frac{d!S_{2}(v,d)}{v!}w^{v},
	\label{SN-1}
\end{equation}%
where $d\in \mathbb{N}_{0}$ 
\begin{equation}
	y^{v}=\sum\limits_{d=0}^{v}S_{2}(v,d)y_{(d)}  \label{S2-1a}
\end{equation}%
(\textit{cf}. \cite{SrivatavaChoi}).

Simsek \cite{SimsekMMAS2021} defined the following numbers $\ell _{n}\left(
v\right) $:%
\begin{equation}
	\sum_{n=0}^{\infty }\ell _{n}\left( v\right) \frac{w^{n}}{n!}=\frac{w^{v}}{%
		\prod\limits_{j=0}^{v-1}\left( e^{w}-j\right) }  .\label{1g2}
\end{equation}%

By the following identities, we see that the numbers $\ell _{n}(v)$ are related to the many known special numbers:%
\begin{equation*}
	\ell _{n}(2)=\sum_{q=0}^{n}\ell _{q}(1)B_{n-q},
\end{equation*}%
\begin{equation*}
	\ell _{n}(2)=\sum_{q=0}^{n}(-1)^{q+1}qB_{n-q}=nB_{n-1}(-1),
\end{equation*}%
and also
\begin{eqnarray*}
	\ell _{n}(3) &=&-n\sum_{q=0}^{n-1}\sum_{k=0}^{n-q-1}\binom{n-1}{q}k!\ell
	_{q}(2)S_{2}\left( n-q-1,k\right)  \\
	&=&-n\sum_{q=0}^{n-1}\binom{n-1}{q}\ell _{q}(2)w_{g}(n-q-1) \\
	&=&\frac{1}{2}\sum_{q=0}^{n}\binom{n}{q}\mathcal{B}_{n-q}\left( \frac{1}{2}%
	\right) \ell _{q}(2),
\end{eqnarray*}%
where $w_{g}(n)$ denotes the Fubini numbers (\textit{cf}. \cite%
{SimsekMMAS2021}, see also \cite{KilarSMontes}).

The polynomial $\ell _{n}(x;v)$ is defined by%
\begin{equation}
	F(w,x;v)
=	\frac{w^{v}}{\prod\limits_{j=0}^{v-1}\left( e^{w}-j\right) }%
	e^{xw}=\sum_{n=0}^{\infty }\ell _{n}(x;v)\frac{w^{n}}{n!},  \label{1g2AA}
\end{equation}%
which implies%
\begin{equation}
	\ell _{n}(x;v)=\sum_{j=0}^{n}\binom{n}{j}x^{j}\ell _{n-j}(v)  \label{1g2AA1}
\end{equation}%
(\textit{cf}. \cite[Theorem 7]{SimsekMMAS2021}).

Simsek \cite{SimsekMMAS2021} showed that \cite{SimsekMMAS2021} special values of the polynomials $\ell _{n}(x;v)$ reduces to the Frobenius–Euler polynomials, the Apostol-Bernoulli and Euler polynomials and also the Bernoulli polynomials.

Here we note that the polynomials $\ell _{n}\left(x; v\right)$ are called the Frobenius–Euler type Simsek polynomials by Agyuz \cite{AgyuzFilomat}. After that, Rao et al \cite{Rao,Rao2} and Kilar \cite{KilarMontes} have studied applications of these polynomials. For other similar studies, see also the authors' paper  \cite{Peneva} and  Kucukoglu \cite{IK,KucukogluMTJPAM}. They gave many interesting applications of these polynomials. 

By using (\ref{1g2AA}), we have%
\begin{equation*}
	\frac{e^{w(x-1)}}{e^{w}-2}\frac{1}{e^{w}-3}\cdots \frac{1}{e^{w}-v+1}=\left(
	e^{w}-1\right) \sum_{n=0}^{\infty }\ell _{n}(x;v)\frac{w^{n}}{n!}.
\end{equation*}%
Therefore, we get%
\begin{equation}
	\frac{w^{v}}{\prod\limits_{j=2}^{v-1}\left( \frac{1}{j}e^{w}-1\right) }%
	e^{(x-1)w}=(v-1)!\left( e^{w}-1\right) \sum_{n=0}^{\infty }\ell _{n}(x;v)%
	\frac{w^{n}}{n!} \label{dp1}
\end{equation}
and%
\begin{eqnarray}
	\sum_{n=0}^{\infty }\ell _{n}(x;v)\frac{w^{n}}{n!} &=&\frac{w^{v}}{%
		\prod\limits_{j=1}^{v-1}\left( 1-je^{-w}\right) }e^{\left( x-v\right) w}
	\label{dp7} \\
	&=&w^{v}e^{\left( x-v\right) w}\prod\limits_{j=1}^{v-1}\sum_{n=0}^{\infty
	}j^{n}e^{-nw}.  \notag
\end{eqnarray}
Recently, the authors \cite{Peneva} gave many novel formulas for these polynomials $\ell _{n}(x;v)$.

More detailed information on the properties of special numbers and polynomials, and the functions to which they relate, can be found in the reference cited therein (\textit{cf}. 
\cite{AgyuzFilomat,Apostol,Barnes,Batemen,Bayad,Jordan,TkimJKMS,knopp,Knuf,KucukogluMTJPAM,MilneThomson,Rao,Rao2,Peneva,simsekRJMP.mtjpam,SimsekMMAS2021,SrivatavaChoi,Tichmarsh}).
\section{Special values of the series $Z(s;x,v)$} \label{S1}

In this section, we give some special values of the series
$Z(s;x,v)$ in terms of the Lerch zeta function $\Phi (s;z,x)$ and
the Hurwitz zeta function $\zeta (s,x)$.

We now evaluate the function $Z(s;x,v)$ at $v=3$. Putting $v=3$ in Eq. (%
\ref{DM-1}) 
\[
Z(s;x,3)=-\frac{s(s+1)(s+2)}{2}\sum\limits_{m=0}^{\infty
}\sum\limits_{k_{1}=0}^{m}\frac{1}{2^{m-k_{1}}\left( x-1+m\right) ^{s+3}}.
\]%
Thus 
\[
Z(s;x,3)=-\frac{s(s+1)(s+2)}{2}\sum\limits_{m=0}^{\infty }\frac{1}{%
	2^{m}\left( x-1+m\right) ^{s+3}}\sum\limits_{k_{1}=0}^{m}2^{k_{1}}.
\]%
Since%
\[
\sum\limits_{k_{1}=0}^{m}2^{k_{1}}=\frac{2^{m+1}-1}{2-1},
\]%
after some calculations, we have%
\[
Z(s;x,3)=-\frac{s(s+1)(s+2)}{2}\sum\limits_{m=0}^{\infty }\frac{2^{m+1}-1%
}{2^{m}\left( x-1+m\right) ^{s+3}}.
\]%
Combining the above equation with definitions of the Hurwitz zeta function and the 
Lerch zeta function, we get%
\[
Z(s;x,3)=-s(s+1)(s+2)\left(\zeta (s+3,x-1)-\frac{1}{2}\Phi \left( s+3;%
\frac{1}{2},x-1\right)\right) ,
\]%
where $x-1\notin \mathbb{Z}_{0}^{-}$ and $Re(s+3)>1$.

We now evaluate the function $Z(s;x,4)$. Putting $v=4$ in (\ref{DM-1}),
we obtain%
\begin{eqnarray*}
	Z(s;x,4)=	-\frac{s(s+1)(s+2)(s+3)}{6}\sum\limits_{m=0}^{\infty }\sum\limits_{k_{2}=0}^{m}\sum	\limits_{k_{1}=0}^{k_{2}}\frac{1}{
		1^{k_{1}}2^{k_{2}-k_{1}}3^{m-k_{2}}\left( x-1+m\right) ^{s+4}}
\end{eqnarray*}
Since%
\begin{eqnarray*}
	\sum\limits_{k_{2}=0}^{m}\sum\limits_{k_{1}=0}^{k_{2}}\frac{1}{%
		2^{k_{2}-k_{1}}3^{m-k_{2}}} &=&\frac{1}{3^{m}}\sum\limits_{k_{2}=0}^{m}%
	\left( \frac{3}{2}\right) ^{k_{2}}\sum\limits_{k_{1}=0}^{k_{2}}2^{k_{1}} \\
	&=&\frac{1}{3^{m}}\sum\limits_{k_{2}=0}^{m}\left( \frac{3}{2}\right) ^{k_{2}}%
	\frac{2^{k_{2}+1}-1}{2-1} \\
	&=&3+\frac{1}{3^{m}}-\frac{3}{2^{m}},
\end{eqnarray*}%
we get%
\begin{eqnarray*}
	Z(s;x,4) &=&-\frac{s(s+1)(s+2)(s+3)}{2}\sum\limits_{m=0}^{\infty }\frac{1%
	}{\left( x-1+m\right) ^{s+4}} \\
	&&-\frac{s(s+1)(s+2)(s+3)}{6}\sum\limits_{m=0}^{\infty }\frac{1}{3^{m}\left(
		x-1+m\right) ^{s+4}} \\
	&&+\frac{s(s+1)(s+2)(s+3)}{2}\sum\limits_{m=0}^{\infty }\frac{1}{2^{m}\left(
		x-1+m\right) ^{s+4}}.
\end{eqnarray*}%
Combining the above equation with the definitions of the Hurwitz zeta function and the 
Lerch zeta function, we get%
\begin{eqnarray*}
	Z(s;x,4) &=&-\frac{s(s+1)(s+2)(s+3)}{2}\zeta (s+4,x-1) \\
	&&-\frac{s(s+1)(s+2)(s+3)}{6}\Phi \left( s+4;\frac{1}{3},x-1\right)  \\
	&&+\frac{s(s+1)(s+2)(s+3)}{2}\Phi \left( s+4;\frac{1}{2},x-1\right) ,
\end{eqnarray*}%
where $x-1\notin \mathbb{Z}_{0}^{-}$ and $Re(s+4)>1$.

We now evaluate the function $Z(s;x,5)$. Putting $v=5$ in (\ref{DM-1}),
we obtain 
\[
Z(s;x,5)=-\frac{s(s+1)(s+2)(s+3)(s+4)}{24}\sum\limits_{m=0}^{\infty
}\sum\limits_{k_{3}=0}^{m}\sum\limits_{k_{2}=0}^{k_{3}}\sum%
\limits_{k_{1}=0}^{k_{2}}\frac{1}{1^{k_{1}}2^{k_{2}-k_{1}}3^{k_{3}-k_{2}}4^{m-k_{3}}%
	\left( x-1+m\right) ^{s+5}}.
\]%
Since
\begin{align*}
	\sum_{k_{3}=0}^{m}
	\sum_{k_{2}=0}^{k_{3}}
	\sum_{k_{1}=0}^{k_{2}}
	\frac{1}{
		2^{k_{2}-k_{1}}
		3^{k_{3}-k_{2}}
		4^{m-k_{3}}
	}
	&=
	\frac{1}{4^{m}}
	\sum_{k_{3}=0}^{m}
	\left(\frac{4}{3}\right)^{k_{3}}
	\sum_{k_{2}=0}^{k_{3}}
	\left(\frac{3}{2}\right)^{k_{2}}
	\sum_{k_{1}=0}^{k_{2}}2^{k_{1}}
	\\
	&=
	\frac{1}{4^{m}}
	\sum_{k_{3}=0}^{m}
	\left(\frac{4}{3}\right)^{k_{3}}
	\sum_{k_{2}=0}^{k_{3}}
	\left(\frac{3}{2}\right)^{k_{2}}
	\left(2^{k_{2}+1}-1\right)
	\\
	&=
	\frac{1}{4^{m}}
	\sum_{k_{3}=0}^{m}
	\left(\frac{4}{3}\right)^{k_{3}}
	\left[
	\left(3^{k_{3}+1}-1\right)
	-
	\left(
	\frac{3^{k_{3}+1}}{2^{k_{3}}} - 2
	\right)
	\right]
	\\
	&=
	\frac{1}{4^{m}}
	\sum_{k_{3}=0}^{m}
	\left(\frac{4}{3}\right)^{k_{3}}
	\left(
	3^{k_{3}+1} - 3 \cdot \frac{3^{k_{3}}}{2^{k_{3}}} + 1
	\right)
	\\
	&=
	\frac{1}{4^{m}}
	\sum_{k_{3}=0}^{m}
	\left[
	3 \cdot 4^{k_{3}} - 3 \cdot 2^{k_{3}} + \left(\frac{4}{3}\right)^{k_{3}}
	\right]
	\\
		&=
	4 - 6\left(\frac{1}{2}\right)^{m} + 4\left(\frac{1}{3}\right)^{m} - \left(\frac{1}{4}\right)^{m},
\end{align*}
we get
\begin{eqnarray*}
	Z(s;x,5) &=&-\frac{s(s+1)(s+2)(s+3)(s+4)}{6}\sum\limits_{m=0}^{\infty }%
	\frac{1}{\left( x-1+m\right) ^{s+5}} \\
	&&+\frac{s(s+1)(s+2)(s+3)(s+4)}{4}\sum\limits_{m=0}^{\infty }\frac{1}{%
		2^{m}\left( x-1+m\right) ^{s+5}} \\
	&&-\frac{s(s+1)(s+2)(s+3)(s+4)}{6}\sum\limits_{m=0}^{\infty }\frac{1}{%
		3^{m}\left( x-1+m\right) ^{s+5}} \\
	&&+\frac{s(s+1)(s+2)(s+3)(s+4)}{24}\sum\limits_{m=0}^{\infty }\frac{1}{%
		4^{m}\left( x-1+m\right) ^{s+5}}.
\end{eqnarray*}%
Combining the above equation with definitions of the Hurwitz zeta function,
the Lerch zeta function, we get%
Using the definition of the Hurwitz zeta function together with
Eq.~(\ref{LZ}), we obtain
\begin{align*}
	Z(s;x,5)
	=-&
	\frac{s(s+1)(s+2)(s+3)(s+4)}{24}
	\Bigg\{
	4\zeta(s+5,x-1)
	-6\Phi\left(s+5;\frac{1}{2},x-1\right)
	\\
	&+4\Phi\left(s+5;\frac{1}{3},x-1\right)
	-\Phi\left(s+5;\frac{1}{4},x-1\right)
	\Bigg\}.
\end{align*}
where $x-1\notin \mathbb{Z}_{0}^{-}$ and $Re(s+5)>1$.

We now give the following modification of Eq.  (\ref{DM-1}):
\begin{eqnarray}
	Z(s;x,v) &=&-\frac{s(s+1)\cdots(s+v-1)}{(v-1)!}\sum\limits_{m=0}^{\infty }\frac{1}{(v-1)^{m}}%
	\sum\limits_{k_{v-2}=0}^{m}\left( \frac{v-1}{v-2}\right) ^{k_{v-2}}\cdots
	\sum\limits_{k_{3}=0}^{k_{4}}\left( \frac{4}{3}\right) \notag ^{k_{3}} \\
	&&\times \sum\limits_{k_{2}=0}^{k_{3}}\left( \frac{3}{2}\right)
	^{k_{2}}\sum\limits_{k_{1}=0}^{k_{2}}\frac{2^{k_{1}}}{\left( x-1+m\right) ^{s+v}}. \label{1ii}
\end{eqnarray}%

We give the following open problem involving functional equation for the function  $Z(s;x,v)$: 

We have examined in the special cases above that this function can also be
represented as linear combinations of the functions $\zeta (s,x-1)$ and $%
\Phi \left( s;z,x-1\right) $. That is, its representation in terms of the
functions $\zeta (s,x-1)$ and $\Phi \left( s;z,x-1\right) $ may be possible to
give in the following form:
\[
Z(s;x,v)=-\frac{s(s+1)\cdots(s+v-1)}{(v-1)!}\left((v-1) \zeta
(s+v,x-1)+\sum\limits_{j=2}^{v-1}\alpha _{j}\Phi \left( s+v;\frac{1}{j},x-1\right)
\right),
\]%
where $x-1\notin\mathbb Z_0^{-}$,  $Re(s+v)>1$ and also $\alpha _{j}$ are rational numbers. 

Find the coefficients $\alpha _{j}$?

In the next sections, we give some novel formulas involving relations among the functions $\zeta (s,x-1)$, $%
\Phi \left( s;z,x-1\right) $ and the polynomials $\ell _{n}(x;v)$.

\section{Interpolation function for the polynomials $\ell _{n}(x;v)$} \label{S2}

We show that these infinite series have analytic
continuations on the complex $s$-plane. We also give interpolation function for the polynomials $\ell _{n}(x;v)$ with the aid of integral representation of the function $Z(s;x,v)$ and the Cauchy Residue Theorem.

\subsection{Integral representation of the function $Z(s;x,v)$}

In order to give integral representation of the function $Z(s;x,v)$, we use a similar method to define an interpolation function for the
polynomials $\ell_n(x;v)$. Recall the following integral representation
of the Gamma function:
\[
\Gamma(s+v)
=
y^{s+v}
\int_0^\infty e^{-yw}w^{s+v-1}\,dw,
\]
where
$
\Re(s+v)>0$  and $
\Re(y)>0$ (\textit{cf.} \cite[p. 202]{SrivatavaChoi}). Putting
$
y=m_1+m_2+\cdots+m_{v-1}+x-1,
$
$(m_1,\ldots,m_{v-1}\in\mathbb N_0)$ in the above equation, we obtain
\[
\frac{\Gamma(s+v)}
{\left(m_1+\cdots+m_{v-1}+x-1\right)^{s+v}}
=
\int_0^\infty
e^{-(m_1+\cdots+m_{v-1}+x-1)w}
w^{s+v-1}\,dw.
\]
Multiplying both sides of the previous equation by
$
\frac{1}{\displaystyle\prod_{j=1}^{v-1}j^{m_j}}
$
and summing over $m_1,\ldots,m_{v-1}$ from $0$ to $\infty$, we get
\[
\Gamma(s+v)
\sum_{m_1,\ldots,m_{v-1}=0}^{\infty}
\frac{1}
{\displaystyle
	\left(\prod_{j=1}^{v-1}j^{m_j}\right)
	\left(m_1+\cdots+m_{v-1}+x-1\right)^{s+v}}
\]
\[
=
\int_0^\infty
w^{s+v-1}e^{-(x-1)w}
\prod_{j=1}^{v-1}
\left(
\sum_{m_j=0}^{\infty}
\frac{e^{-m_jw}}{j^{m_j}}
\right)
\,dw,
\]
where $$\sum_{m_1=0}^{\infty}\sum_{m_2=0}^{\infty}\cdots \sum_{m_{v-1}=0}^{\infty}=\sum_{m_1,\ldots,m_{v-1}=0}^{\infty}.$$
Combining this equation with Eq. (\ref{1g2AA}) by replacing $w$ by $-w$, we obtain
\begin{equation}
	\frac{\Gamma(s+v)}{-(v-1)!}\sum_{m_1,\ldots,m_{v-1}=0}^{\infty}
	\frac{1}
	{\displaystyle
		\left(\prod_{j=1}^{v-1}j^{m_j}\right)
		\left(m_1+\cdots+m_{v-1}+x-1\right)^{s+v}}
	=\int_0^\infty
	F(-w,x;v)w^{s-1}\,dw.\label{B}
\end{equation}
In order to give integral representation of the function $Z(s;x,v)$, we not only need the following well-known formula for the Gamma function:
\begin{equation}
	\frac{\Gamma(s+v}{\Gamma(s)}=s(s+1)\ldots(s+v-1), \label{gg1}	
\end{equation} 
	 $v\in \mathbb{N}_0,$ but also define $\Gamma(s)$ for $\Re (s)>-v$ ($v\in \mathbb{N}_0,$) as an analytic function except
for $s=0,-1,-2,\dots -v+1.$ Consequently, the function $\Gamma(s)$ can be continued analytically to the whole
complex $s$-plane except for simple poles at $s \in  \mathbb Z_0^{-}$, for detail see \cite[p. 3]{SrivatavaChoi}). Since $\Gamma(s)$ is a meromorphic function on the whole
complex $s$-plane with simple poles at $s=-v$ ($v\in \mathbb{N}_0,$) with their well-known respective residues are given as follows:
\begin{equation}
	Res(\Gamma(s))_{s=-v}=\frac{(-1)^v}{v!} \label{gg2}
	\end{equation}
for detail see also\cite[p. 4]{SrivatavaChoi}).

Equations (\ref{B}), (\ref{gg1}), (\ref{gg2}) and  (\ref{1g2AA}) motivate the following definition for the function $Z(s;x,v)$:
\begin{equation*}
	Z(s;x,v)
	:=
	\frac{(v-1)!}{\Gamma(s)}
	\int_0^\infty w^{s-1}
	F(-w,x;v)dw.
\end{equation*}
Therefore
\begin{equation}
	Z(s;x,v)=-
	\frac{(v-1)!}{\Gamma(s)}
	\int_0^\infty w^{s+v-1}\frac{e^{-(x-1)w}}{\prod\limits_{j=1}^{v-1}\left( 1-%
	\frac{1}{j}e^{-w}\right) }dw, \label{C}
\end{equation}
 where
$v \in \mathbb{N}$, $
v\geq 2$; $
x-1\in\mathbb C\setminus\mathbb Z_0^-
$; $s \in \mathbb{C}$ and $\Re(s)>v$.

Substituting $v=2$ into \eqref{C}, we arrive at Theorem 2.4 for $n=2$, given by Srivastava and Choi \cite[p. 144]{SrivatavaChoi}. That is
\[
\int_0^\infty w^{s+1}\frac{e^{-(x-1)w}}{1-e^{-w} }dw=-\Gamma(s)\zeta(s+2,x-1),
\]
where
$
\Re(s)>0$,
$x-1\in\mathbb C\setminus\mathbb Z_0^-.$

Putting $v=3$ in \eqref{C}, we have
\begin{equation*}
\int_0^\infty w^{s+2}\frac{e^{-(x-1)w}}{\left( 1-e^{-w}\right) \left( 1-\frac{1}{2}e^{-w}\right)}dw=-\frac{\Gamma(s)}{2}
\sum_{m_1=0}^{\infty}
\Phi
\left(
s+3;
\frac12,
m_1+x-1
\right)
\end{equation*}
	where
	$v \in \mathbb{N}$, $
	v\geq 2$; $
	x-1\in\mathbb C\setminus\mathbb Z_0^-
	$; $s \in \mathbb{C}$ and $\Re(s)>v$.
	Putting $v=4$ in \eqref{C}, we have
	\begin{equation*}
		\int_0^\infty w^{s+3}\frac{e^{-(x-1)w}}{\left( 1-e^{-w}\right) \left( 1-\frac{1}{2}e^{-w}\right)}dw=-\frac{\Gamma(s)}{6}
	\sum_{m_1=0}^{\infty}
	\sum_{m_2=0}^{\infty}
	\frac{1}{2^{m_2}}
	\Phi
	\left(
	s+4;
	\frac13,
	m_1+m_2+x-1
	\right).
\end{equation*}
Combining (\ref{B}) with \eqref{C}, we get
\begin{equation*}
	Z(s;x,v)=-\frac{\Gamma(s+v)}{(v-1)!\Gamma(s)}\sum_{m_1,\ldots,m_{v-1}=0}^{\infty}
\frac{1}
{\displaystyle
	\left(\prod_{j=1}^{v-1}j^{m_j}\right)
	\left(m_1+\cdots+m_{v-1}+x-1\right)^{s+v}}.
\end{equation*}
After some calculations, we get the following a relation between the Lerch zeta function and  the function $Z(s;x,v)$:
\begin{theorem}

\begin{equation}
	Z(s;x,v)=-\frac{\Gamma(s+v)}{(v-1)!\Gamma(s)}\sum_{m_1,\ldots,m_{v-2}=0}^{\infty}
	\frac{1}
	{\displaystyle
		\left(\prod_{j=1}^{v-2}j^{m_j}\right)
		}\Phi
		\left(
		s+v;
		\frac{1}{v-1},
		m_1+\cdots+m_{v-2}+x-1
		\right), \label{ky}
\end{equation}
where 	$v \in \mathbb{N}$, $
v\geq 2$; $
x-1\in\mathbb C\setminus\mathbb Z_0^-
$; $s \in \mathbb{C}$ and $\Re(s)>v$.
\end{theorem}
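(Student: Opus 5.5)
The plan is to start from the last displayed identity before the theorem, obtained by combining (\ref{B}) with (\ref{C}):
\[
Z(s;x,v)=-\frac{\Gamma(s+v)}{(v-1)!\Gamma(s)}\sum_{m_1,\ldots,m_{v-1}=0}^{\infty}
\frac{1}{\left(\prod_{j=1}^{v-1}j^{m_j}\right)\left(m_1+\cdots+m_{v-1}+x-1\right)^{s+v}} .
\]
We then single out the innermost index $m_{v-1}$. All other indices are frozen, and we write $y=m_1+\cdots+m_{v-2}+x-1$. The factor $(v-1)^{-m_{v-1}}$ separates from $\prod_{j=1}^{v-2}j^{-m_j}$, so the inner sum becomes
\[
\sum_{m_{v-1}=0}^{\infty}\frac{(1/(v-1))^{m_{v-1}}}{(m_{v-1}+y)^{s+v}}=\Phi\!\left(s+v;\tfrac{1}{v-1},y\right),
\]
which is exactly definition (\ref{LZ}) with $z=1/(v-1)$ and $l=m_{v-1}$. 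Substituting this back yields (\ref{ky}) directly.

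The only real work is justifying the rearrangement, that is, writing the $(v-1)$-fold sum as an iterated sum with $m_{v-1}$ innermost. For this I would prove absolute convergence of the multiple series when $\Re(s)>v$. Put $\sigma=\Re(s+v)$ and $a=\Re(x-1)$, and assume first $a>0$ for convenience. The terms are bounded in modulus by
\[
\prod_{j=1}^{v-1}j^{-m_j}\,|M+x-1|^{-\sigma},\qquad M=m_1+\cdots+m_{v-1}.
\]
For $j\ge 2$ the geometric factors are summable, and only $m_1$ carries no decay. The sum over $m_1$ is then dominated by a Hurwitz-type series $\sum_{m_1}|m_1+c|^{-\sigma}$, which converges because $\sigma>1$. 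For general complex $x-1\notin\mathbb Z_0^-$, only finitely many terms with $M+\Re(x-1)\le 1$ need separate treatment, and $|M+x-1|\ge \tfrac12 (M+1)$ holds for $M$ large. Tonelli's theorem (for the modulus) and then Fubini's theorem permit the iterated summation in any order. The same bound also shows that each $\Phi(s+v;\frac1{v-1},y)$ appearing in the result is well defined with $y\notin\mathbb Z_0^-$. This holds because $m_1+\cdots+m_{v-2}\in\mathbb N_0$ and $x-1\notin\mathbb Z_0^-$, and $|z|=1/(v-1)<1$ for $v\ge3$.

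The case $v=2$ needs a short separate remark, since the $\Phi$ parameter is then $z=1$. In that case the product over $j\le v-2$ is empty, and (\ref{ky}) reads $Z(s;x,2)=-s(s+1)\Phi(s+2;1,x-1)=-s(s+1)\zeta(s+2,x-1)$, where we use (\ref{gg1}). This agrees with the definition of $Z(s;x,2)$, and here $\Re(s+2)>1$ is what guarantees convergence.

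The main obstacle I expect is the earlier step on which everything rests: validating the identity (\ref{B}) itself, which requires interchanging the sum with the integral. I would handle it by dominated convergence using the same absolute-convergence estimate, taking the paper's earlier derivation as given. Apart from that, the theorem is a bookkeeping consequence of the definition of $\Phi$.
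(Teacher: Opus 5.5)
Your proposal is correct and is the paper's argument. The paper likewise takes the $(v-1)$-fold series displayed just before the theorem and sums out $m_{v-1}$ to obtain $\Phi\bigl(s+v;\tfrac{1}{v-1},m_1+\cdots+m_{v-2}+x-1\bigr)$. Your absolute-convergence/Fubini justification and your separate $v=2$ check fill in what the paper calls ``some calculations''. Your modulus bound needs a harmless constant factor $e^{\pi|\mathrm{Im}(s)|}$ when $x$ is not real.

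One caution concerns where your starting identity comes from. It does not literally follow from (\ref{B}) and (\ref{C}) as printed: (\ref{C}) has $(v-1)!$ where $1/(v-1)!$ is needed, and would give the prefactor $-(v-1)!\,\Gamma(s+v)/\Gamma(s)$. Derive it instead directly from (\ref{DM-1}) via $m_j=k_j-k_{j-1}$ (with $k_0=0$, $k_{v-1}=m$), which your absolute-convergence argument already justifies and which makes your concern about (\ref{B}) unnecessary.
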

Substituting $s=-n$ ($n\in \mathbb{N}$) into Eq. (\ref{DM-1}),
and applying Eq. (\ref{ZZ}) with $n$ replaced by $n-v$, we get the following theorem.
\begin{theorem}
	\label{Theo.I}Let $n,v\in \mathbb{N}$ with $v\geq 2$.Then we have%
	\begin{equation}
		Z(-n,x;v)=-\ell _{n}(x;v). \label{ky1}
	\end{equation}
\end{theorem}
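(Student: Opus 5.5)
The plan is to deduce Theorem~\ref{Theo.I} from the integral representation (\ref{C}) rather than from the series (\ref{DM-1}). The Mellin-type integral in (\ref{C}) only converges for $\Re(s)$ large. So I would first replace it by a Hankel loop integral, which defines an entire function of $s$. The reciprocal Gamma factor then supplies the analytic continuation, and at $s=-n$ the loop collapses to a small circle around the origin, where the Cauchy Residue Theorem picks out a Taylor coefficient of $F(-w,x;v)$. By (\ref{1g2AA}) that coefficient is $\ell_n(x;v)$ up to explicit factors.

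\textbf{Step 1 (location of singularities).} Write $F(-w,x;v)=(-w)^{v}e^{-xw}\big/\prod_{j=0}^{v-1}(e^{-w}-j)$. The factor $j=1$ has a simple zero at $w=0$, which is cancelled by $w^{v}$, so $F(-w,x;v)$ is holomorphic at $0$ and in fact vanishes to order $v-1$ there. The remaining singularities are at $w=-\log j-2\pi i k$ with $j\ge1$, $k\in\mathbb{Z}$ and $(j,k)\neq(1,0)$. All of them lie at distance at least $\min(\log 2,2\pi)=\log 2$ from the closed positive real axis. Hence a Hankel contour $\mathcal{C}$ can be chosen: it comes in from $+\infty$ above the real axis, circles $0$ with radius $\varepsilon<\log 2$, and returns below the axis, enclosing no pole. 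Along $\mathcal{C}$ the integrand decays like $e^{-\Re(x-1)w}$, using $\Re(x)>1$ as is implicit in (\ref{C}).

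\textbf{Step 2 (continuation).} With $w^{s-1}=e^{(s-1)\log w}$ and the branch cut along $[0,\infty)$, the standard computation gives
\[
\int_{\mathcal{C}} w^{s-1}F(-w,x;v)\,dw=\left(e^{2\pi i s}-1\right)\int_0^\infty w^{s-1}F(-w,x;v)\,dw
\]
for $\Re(s)$ large. Combining this with $\Gamma(s)\Gamma(1-s)=\pi/\sin(\pi s)$ turns the definition of $Z(s;x,v)$ into
\[
Z(s;x,v)=\frac{(v-1)!\,e^{-\pi i s}\,\Gamma(1-s)}{2\pi i}\int_{\mathcal{C}} w^{s-1}F(-w,x;v)\,dw,
\]
up to the orientation sign of $\mathcal{C}$. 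The right-hand side is meromorphic in $s$, with possible poles only from $\Gamma(1-s)$ at $s\in\mathbb{N}$, and it is entire in the relevant region. In particular it is regular at $s=-n$.

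\textbf{Step 3 (evaluation at $s=-n$).} For $s=-n$ the function $w^{-n-1}$ is single-valued, so the contributions of the two rays cancel and only the circle $|w|=\varepsilon$ survives. By the Residue Theorem and (\ref{1g2AA}) with $w$ replaced by $-w$,
\[
\frac{1}{2\pi i}\oint_{|w|=\varepsilon} w^{-n-1}F(-w,x;v)\,dw=\frac{(-1)^n\ell_n(x;v)}{n!}.
\]
Multiplying by $\Gamma(1+n)=n!$ and $e^{\pi i n}=(-1)^n$ leaves a constant multiple of $\ell_n(x;v)$. Identifying that constant as $-1$ gives (\ref{ky1}).

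\textbf{Main obstacle.} The step I expect to be delicate is the bookkeeping of constants: the orientation of $\mathcal{C}$, the sign in $F(-w)$, and in particular the factor $(v-1)!$. The normalisations in (\ref{B}), (\ref{C}) and the definition of $Z$ are not fully consistent with each other, since $\prod_{j=1}^{v-1}(e^{-w}-j)$ produces a factor $(-1)^{v-1}(v-1)!$. I would fix the convention by checking the case $v=2$ against the known value $\zeta(-k,a)=-B_{k+1}(a)/(k+1)$. That is, I would verify that $-s(s+1)\zeta(s+2,x-1)$ at $s=-n$ reproduces $-\ell_n(x;2)$ via $\ell_n(x;2)=nB_{n-1}(x-1)$, and then propagate the same normalisation to general $v$.
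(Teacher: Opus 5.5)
\textbf{Verdict: there is a genuine gap.} Your route is the paper's route: a Hankel loop, collapse to the small circle at $s=-n$, and the residue of $w^{-n-1}F(-w,x;v)$ at $w=0$. Steps 1--3 are sound. The gap is the sentence ``Identifying that constant as $-1$.'' The constant is not a convention you are free to fix. Your own Step 2 formula, with the counter-clockwise orientation you chose, already determines it:
\[
Z(-n;x,v)=\frac{(v-1)!\,(-1)^n\,n!}{2\pi i}\oint_{|w|=\varepsilon}w^{-n-1}F(-w,x;v)\,dw=(v-1)!\,(-1)^n\,n!\cdot\frac{(-1)^n\ell_n(x;v)}{n!}=(v-1)!\,\ell_n(x;v).
\]
For the series (\ref{DM-1}), which by (\ref{B}) equals $\frac{1}{\Gamma(s)}\int_0^\infty w^{s-1}F(-w,x;v)\,dw$, the same computation gives $Z(-n;x,v)=\ell_n(x;v)$. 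With (\ref{C}) it gives $((v-1)!)^2\,\ell_n(x;v)$. None of the paper's normalizations produces the constant $-1$.

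The $v=2$ test you propose would refute the sign rather than fix it. At $s=-n$, $-s(s+1)\zeta(s+2,x-1)$ equals $-n(n-1)\zeta(2-n,x-1)=nB_{n-1}(x-1)=+\ell_n(x;2)$. For example, $Z(-2;x,2)=-2\zeta(0,x-1)=2x-3=\ell_2(x;2)$, and the limit at $s=-1$ is $1=\ell_1(x;2)$. The same happens for $v=3$: the paper's closed form for $Z(s;x,3)$ gives $Z(-2;x,3)=-2$, while $F(w,x;3)=-w^2+O(w^3)$ gives $\ell_2(x;3)=-2$. Formally inserting (\ref{ZZ}) into (\ref{DM-1}) at $s=-n$ also yields $+\ell_n(x;v)$.

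So (\ref{ky1}) is false as stated; the correct identity is $Z(-n;x,v)=\ell_n(x;v)$. The paper's proof reaches the minus sign only by writing $h(s)/\Gamma(s)=(e^{2i\pi s}-1)Z(s;x,v)$. In fact
\[
\frac{w^{v}e^{-(x-1)w}}{\prod_{j=1}^{v-1}\left(1-\tfrac1j e^{-w}\right)}=-(v-1)!\,F(-w,x;v),
\]
so (\ref{C}) actually gives $h(s)/\Gamma(s)=-(e^{2i\pi s}-1)Z(s;x,v)/(v-1)!$, and with that factor the sign cancels. Carried through honestly, your argument proves the corrected statement $Z(-n;x,v)=\ell_n(x;v)$, not (\ref{ky1}). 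You should not tune conventions to land on $-1$.
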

\begin{proof}
	In view of equations (\ref{gg1}), (\ref{B}) and (\ref{C}), we define $h(s)$ by means of the following contour integral:
	\begin{equation}
		h(s)=	\int_C w^{s+v-1}\frac{e^{-(x-1)w}}{\prod\limits_{j=1}^{v-1}\left( 1-%
			\frac{1}{j}e^{-w}\right) }dw, \label{cr}
	\end{equation}
	where $C$ denotes the Hankel contour along the cut joining the points $z = 0$ and $z = \infty$ on the real axis,
	which starts from the point at $ \infty$, encircles the origin ($z = 0$) once in the positive (counter-clockwise)
	direction, and returns to the point at $ \infty$ (for details, see also \cite[p.60]{Whittaker} and \cite[p. 204]{SrivatavaChoi}). Here, as usual, we interpret
	zs to mean $\exp (s \log w)$, where we assume log z to be defined by log t on the top part of the real
	axis and by $\log s+2i\pi $ on the bottom part of the real axis, see also \cite{SKS}. Thus the function $	h(s)$ can be written as follows:
	\begin{equation*}
		h(s)= (e^{2i\pi s}-1)\int_{0}^{\infty}t^{s+v-1}\frac{e^{-(x-1)t}}{\prod\limits_{j=1}^{v-1}\left( 1-%
			\frac{1}{j}e^{-t}\right) }dt+\int_{C_{\theta}} w^{s+v-1}\frac{e^{-(x-1)w}}{\prod\limits_{j=1}^{v-1}\left( 1-%
			\frac{1}{j}e^{-w}\right) }dw,
	\end{equation*}
	where $C_{\theta}$denotes the circle of radius $\theta>0$ (and centered at the origin) described in the positive
	(counter-clockwise) direction. Assume first that $\Re(s+v)>1$. It is easy to see that when $\theta \rightarrow 0$, then $%
	\int\limits_{C_{\theta }}\rightarrow 0$, which yields\begin{equation*}
		h(s)= (e^{2i\pi s}-1)\int_{0}^{\infty}t^{s+v-1}\frac{e^{-(x-1)t}}{\prod\limits_{j=1}^{v-1}\left( 1-%
			\frac{1}{j}e^{-t}\right) }dt.
	\end{equation*}
	After some elementary calculations, we have
	\begin{equation*}
		\frac{h(s)}{\Gamma(s)}= (e^{2i\pi s}-1)Z(s;x,v).
	\end{equation*}
	By the principle of analytic continuation, the above equation can be shown to hold for all $s \notin \mathbb{Z}$. The above computations
	provide us with an analytic continuation of the function $Z(s;x,v)$. When $s \rightarrow -n$ ($n \in \mathbb{N}$), with the aid of (\ref{gg2}), we obtain $$\Gamma(s)(e^{2i\pi s}-1) \rightarrow \frac{(-1)^n 2i\pi}{n!}$$ and 
	\begin{equation*}
		h(-n)= 2i\pi Res_{s=-n}\left\{w^{s-1}F(-w,x;v\right\}=-2i\pi \frac{(-1)^n}{n!}\ell _{n}(x;v).
	\end{equation*}
	By applying the Cauchy Residue
	Theorem to Eq. (\ref{cr}), proof of theorem is completed by suitably combining the above equations.
\end{proof}
We note that due to Theorem \ref{Theo.I}, the function $%
Z(s,x;v)$ is an interpolation function of the polynomials $\ell _{n}(x;v)$.

We now give other construction of the function $%
Z(s,x;v)$. By using (\ref{dp1}), we have%
\begin{equation}
	\frac{w^{v}e^{(x-1)w}}{(-1)^{v-1}(v-1)!\prod\limits_{j=1}^{v-1}\left( 1-%
		\frac{1}{j}e^{w}\right) }=\sum_{n=0}^{\infty }\ell _{n}(x;v)\frac{w^{n}}{n!}.
	\label{gs1}
\end{equation}%
Assuming that $Re(w)<0$, so $\left\vert \frac{1}{j}e^{w}\right\vert <1$, ($1\leq j\leq v-1$). Thus Eq. (\ref{gs1}) reduces to%
\begin{eqnarray*}
	\sum_{n=0}^{\infty }\ell _{n}(x;v)\frac{w^{n}}{n!} &=&\frac{w^{v}e^{w(x-1)}}{%
		(-1)^{v-1}(v-1)!}\prod\limits_{j=1}^{v-1}\sum_{m=0}^{\infty }\frac{1}{j^{m}}%
	e^{wm} \\
	&=&\frac{w^{v}e^{w(x-1)}}{(-1)^{v-1}(v-1)!}\sum_{m=0}^{\infty
	}\frac{1}{1^m}e^{wm}\sum_{m=0}^{\infty }\frac{1}{2^{m}}e^{wm}
	\dots\sum_{m=0}^{\infty }\frac{1}{(v-1)^{m}}e^{wm}.
\end{eqnarray*}%
Therefore
\begin{eqnarray*}
	\sum_{n=0}^{\infty }\ell _{n}(x;v)\frac{w^{n}}{n!} &=&\frac{w^{v}}{%
		(-1)^{v-1}(v-1)!}\sum\limits_{m=0}^{\infty } \sum\limits_{k_{v-2}=0}^{m}\\
	&&\times \cdots
	\sum\limits_{k_{3}=0}^{k_{4}}\sum\limits_{k_{2}=0}^{k_{3}}\sum%
	\limits_{k_{1}=0}^{k_{2}}\frac{e^{w(x-1+m)}}{1^{k_{1}}2^{k_{2}-k_{1}}3^{k_{3}-k_{2}}4^{k_{4}-k_{3}}%
		\cdots (v-1)^{m-k_{v-2}}},
\end{eqnarray*}%
which yields
\begin{eqnarray*}
	\sum_{n=0}^{\infty }\ell _{n}(x;v)\frac{w^{n}}{n!} &=&\frac{1}{%
		(-1)^{v-1}(v-1)!}\sum\limits_{m=0}^{\infty }\sum\limits_{n=0}^{\infty
	}\left(x-1+m\right) ^{n}\frac{w^{n+v}}{n!} \\
	&&\times \sum\limits_{k_{v-2}=0}^{m}\cdots
	\sum\limits_{k_{3}=0}^{k_{4}}\sum\limits_{k_{2}=0}^{k_{3}}\sum%
	\limits_{k_{1}=0}^{k_{2}}\frac{1}{1^{k_{1}}2^{k_{2}-k_{1}}3^{k_{3}-k_{2}}4^{k_{4}-k_{3}}%
		\cdots (v-1)^{m-k_{v-2}}}.
\end{eqnarray*}%
Thus
\begin{eqnarray*}
	\sum_{n=0}^{\infty }\ell _{n}(x;v)\frac{w^{n}}{n!} &=&\frac{1}{%
		(-1)^{v-1}(v-1)!}\sum\limits_{m=0}^{\infty }\sum\limits_{n=0}^{\infty }%
	\binom{n}{v}v!\left( x-1+m\right) ^{n-v}\frac{w^{n}}{n!} \\
	&&\times \sum\limits_{k_{v-2}=0}^{m}\cdots
	\sum\limits_{k_{3}=0}^{k_{4}}\sum\limits_{k_{2}=0}^{k_{3}}\sum%
	\limits_{k_{1}=0}^{k_{2}}\frac{1}{1^{k_{1}}2^{k_{2}-k_{1}}3^{k_{3}-k_{2}}4^{k_{4}-k_{3}}%
		\cdots (v-1)^{m-k_{v-2}}}.
\end{eqnarray*}%
After equating the coefficients of the term $\frac{w^{n}}{n!}$ on both sides
of the above equation, we get the following theorem:

\begin{theorem}
	Let $n,v\in \mathbb{N}$ be with $v\geq 3$. Then%
	\begin{eqnarray}
		&&\sum\limits_{m=0}^{\infty }\sum\limits_{k_{v-2}=0}^{m}\cdots
		\sum\limits_{k_{3}=0}^{k_{4}}\sum\limits_{k_{2}=0}^{k_{3}}\sum%
		\limits_{k_{1}=0}^{k_{2}}\frac{\left( x-1+m\right) ^{n-v}}{
			1^{k_{1}}2^{k_{2}-k_{1}}3^{k_{3}-k_{2}}4^{k_{4}-k_{3}}%
			\cdots (v-1)^{m-k_{v-2}}}  \label{ZZ}
		\\
		&=&(-1)^{v-1}\frac{(v-1)!}{n(n-1)\ldots(n-v+1)}\ell _{n}(x;v).  \notag
	\end{eqnarray}
	In particular, for $v=2$, we get
	\begin{eqnarray}
		\sum\limits_{m=0}^{\infty }\left( x-1+m\right) ^{n-2}
		=-\frac{\ell _{n}(x;2)}{n(n-1)}.  \label{ZZ-I}
	\end{eqnarray}
\end{theorem}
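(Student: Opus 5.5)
The plan is to follow the generating-function computation carried out just before the statement, and to read the left-hand side of (\ref{ZZ}) as the value of an analytically continued series. For $n\ge v$ the summand $(x-1+m)^{n-v}$ grows polynomially, while the nested inner sum tends to a positive constant. So the series over $m$ diverges, and it has to be understood as the value at $s=-n$ of the meromorphic continuation of $\sum_m c_m (x-1+m)^{-(s+v)}$. Here
\[
c_m=\sum_{k_{v-2}=0}^{m}\cdots\sum_{k_1=0}^{k_2}\frac{1}{1^{k_1}2^{k_2-k_1}\cdots (v-1)^{m-k_{v-2}}}
\]
is the Cauchy-product coefficient, i.e.\ $\prod_{j=1}^{v-1}\sum_{r\ge0} j^{-r}e^{wr}=\sum_{m\ge0}c_m e^{wm}$. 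By the computations of Section \ref{S1} (geometric sums), $c_m$ is a finite combination of $1$ and $j^{-m}$ for $2\le j\le v-1$. Hence the series is a finite combination of $\zeta(s+v,x-1)$ and $\Phi(s+v;1/j,x-1)$, and each of these continues meromorphically in $s$.

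\textbf{Step 1 (expansion).} Start from (\ref{gs1}) and take $\Re(w)<0$, so that $|e^{w}/j|<1$. Expand each factor $\left(1-\frac1j e^{w}\right)^{-1}$ as a geometric series and form the Cauchy product, which gives $\sum_m c_m e^{wm}$. Then expand $e^{w(x-1+m)}=\sum_{k}(x-1+m)^k w^k/k!$ and shift the index by $v$. The coefficient of $w^n/n!$ produced by $w^{v}(x-1+m)^{n-v}w^{n-v}/(n-v)!$ is $n(n-1)\cdots(n-v+1)\,(x-1+m)^{n-v}$. Equating coefficients with $\ell_n(x;v)$ then yields (\ref{ZZ}) formally, with the sign $(-1)^{v-1}$ coming from the prefactor in (\ref{gs1}).

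\textbf{Step 2 (making it rigorous).} The interchange of the $m$-sum and the $k$-sum in Step 1 is not legitimate for $n\ge v$, and this is the main obstacle. To handle it, I would instead write $\sum_m c_m (x-1+m)^{-(s+v)}$ through the integral representation (\ref{C}), equivalently (\ref{B}). I would then evaluate at $s=-n$ using the Hankel-contour argument in the proof of Theorem \ref{Theo.I}: the value at $s=-n$ is the residue of $w^{-n-1}F(-w,x;v)$ at $w=0$, which is a Taylor coefficient of the generating function (\ref{1g2AA}). Combining this with the prefactor $s(s+1)\cdots(s+v-1)$ from (\ref{gg1}), evaluated at $s=-n$ as $(-1)^v n(n-1)\cdots(n-v+1)$, gives the stated constant $(v-1)!/\bigl(n(n-1)\cdots(n-v+1)\bigr)$.

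\textbf{Step 3 (checks).} The overall sign is delicate, since the two routes (via (\ref{gs1}) and via Theorem \ref{Theo.I} with $w\mapsto -w$) must agree. I would therefore fix the sign by checking the case $v=2$ against the classical identity $\zeta(1-k,a)=-B_k(a)/k$. Since $F(w,x;2)=w\cdot e^{(x-1)w}\cdot\frac{w}{e^w-1}$, we get $\ell_n(x;2)=nB_{n-1}(x-1)$. Therefore $\zeta(2-n,x-1)=-B_{n-1}(x-1)/(n-1)=-\ell_n(x;2)/(n(n-1))$, which is exactly (\ref{ZZ-I}) and confirms the sign $(-1)^{v-1}$ for $v=2$. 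For general $v$ I would track the sign through the Lerch pieces, using the known values $\Phi(-k;z,a)$ expressed through Apostol–Bernoulli polynomials, and compare with the explicit cases $v=3,4,5$ from Section \ref{S1}.
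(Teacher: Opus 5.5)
Your proposal is correct, and it supplies something the paper's proof does not. The paper's proof is exactly your Step~1. It expands (\ref{gs1}) for $\Re(w)<0$ as $\sum_m c_m e^{w(x-1+m)}$, Taylor-expands each exponential, interchanges the $m$-sum with the power series in $w$, and equates coefficients of $w^n/n!$. It never addresses that $\sum_m c_m(x-1+m)^{n-v}$ diverges for $n\ge v$ (since $c_m\to v-1$), so as written it is only a formal derivation. It also leaves implicit what the left-hand side means and for which $n$ the identity holds: for $n=v-1$ both sides are infinite, and for $n\le v-2$ the right-hand side is $0/0$, so only $n\ge v$ is meaningful.

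Your Step~2 fills this in. Take $D(s)=\sum_m c_m(x-1+m)^{-s}$, continued via (\ref{B}) and the Hankel contour. At $s=-n$ the contour integral of $w^{s-1}F(-w,x;v)$ collapses to $2\pi i(-1)^n\ell_n(x;v)/n!$, while $(e^{2\pi i s}-1)\Gamma(s+v)\to 2\pi i(-1)^{n-v}/(n-v)!$. Together with the factor $-1/(v-1)!$ in (\ref{B}), this yields $D(v-n)=(-1)^{v-1}(v-1)!\,\ell_n(x;v)/\bigl(n(n-1)\cdots(n-v+1)\bigr)$, sign included. The paper's route buys brevity, and it lands on the right answer precisely because of the Mellin fact you use: the regular Taylor coefficients of $\sum_m c_m e^{w(x-1+m)}$ at $w=0$ are $D(-k)/k!$. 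Your route buys an actual proof and a precise meaning for the divergent series.

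Three cautions.

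\begin{enumerate}
\item Your Step~3 is only a sanity check. Step~2 already determines the sign, and a $v=2$ check could not pin down a $v$-dependent sign in any case.
\item Do not borrow constants from the paper's proof of Theorem~\ref{Theo.I}. Relative to (\ref{DM-1}), its relation $h(s)/\Gamma(s)=(e^{2\pi i s}-1)Z(s;x,v)$ is off by a factor $-(v-1)!$. Its conclusion $Z(-n;x,v)=-\ell_n(x;v)$, combined with (\ref{DM-1}), would give $(-1)^{v}$ instead of $(-1)^{v-1}$ in (\ref{ZZ}). The correct value is $Z(-n;x,v)=+\ell_n(x;v)$; for example, $Z(-n;x,2)=-n(n-1)\zeta(2-n,x-1)=nB_{n-1}(x-1)=\ell_n(x;2)$.
\item The Mellin integral converges at $\infty$ only for $\Re(x)>1$. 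Other admissible $x$ need a shift of the $m$-sum or continuation in $x$.
\end{enumerate}
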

\begin{remark}
Substituting $n=-s$ into Eq. \textup{(\ref{ZZ})} and using Eq.(\ref{ky1}) and (\ref{ky}), we also derive
definition of the function $Z(s,x;v)$, given in (\ref{DM-1}).
\end{remark}
\begin{remark}
	Observe that  Eq. \textup{(\ref{ZZ})} gives us the values of the
	function $Z(s,x;v)$ at negative integers. Since 
	\begin{eqnarray*}
		\zeta (-n,x-1)=	\sum\limits_{m=0}^{\infty }\left( x-1+m\right) ^{n}
		=-\frac{B_{n+1}(x-1)}{n+1},  
	\end{eqnarray*}
	Eq. \textup{(\ref{ZZ-I})} reduces to the following result:
	\begin{eqnarray*}
		\ell _{n+2}(x;2)	=(n+2)B_{n+1}(x-1) 
	\end{eqnarray*}
	(\textit{cf.} \textup{\cite[Remark 2.4]{KilarMontes}}) and also for $x=0$ in the above equation, we get
	\begin{eqnarray*}
		\ell _{n+2}(2)	=(n+2)B_{n+1}(-1) 
	\end{eqnarray*}
	(\textit{cf.} \textup{\cite[Corollary 5]{SimsekMMAS2021}}).
\end{remark}
Using Eq. (\ref{ZZ}), we define the function (\ref{DM-1}), which is not only related to the function $\Phi (z,s,x)$, but also interpolates the polynomials $\ell _{m}(x;v)$ at negative integers.
The main purpose of this section is to provide values of the series $Z(s,x;v)$ at negative integers. It is shown that this series definitely coincides with
Eq. (\ref{ZZ}).

\section{Applying Laplace transform to equation (\protect\ref{dp7})} \label{S3}

In this section, we apply the Laplace transform to equation (\ref{dp7}); we
find infinite series representation for the polynomials $\ell _{n}(x;v)$.

Using Eq. (\ref{dp7}), we get%
\begin{equation*}
	e^{-(1-\frac{x}{2})w}\sum_{n=0}^{\infty }\ell _{n}(x;v)\frac{(-w)^{n}}{n!}=(-w)^{v}	e^{-\frac{x}{2}w}\prod\limits_{j=1}^{v-1}\sum_{n_j=0}^{\infty }\frac{1}{j^{n_j}}e^{-n_jw}
\end{equation*}%
By applying the Laplace transform to the above equation with the aid of the
uniform convergence property of series, we obtain%
\begin{equation*}
	\sum_{n=0}^{\infty }(-1)^n\ell _{n}(x;v)\frac{1}{n!}\int\limits_{0}^{\infty
	}w^{n}e^{-(1-\frac{x}{2})}dw=(-1)^v\sum_{m_1,\ldots,m_{v-1}=0}^{\infty}\prod\limits_{j=1}^{v-1}\frac{1}{j^{n_j}}\int\limits_{0}^{\infty }w^{v}e^{-(m_1+m_2+\ldots +m_{v-1}+\frac{x}{2})w}dw,
\end{equation*}%
where we assume that $1-\frac{x}{2}>0$ and $\frac{x}{2}>0;$  $\sum_{m_1=0}^{\infty}\sum_{m_2=0}^{\infty}\cdots \sum_{m_{v-1}=0}^{\infty}=\sum_{m_1,\ldots,m_{v-1}=0}^{\infty}.$ Thus, we get%
\begin{equation*}
	\sum_{n=0}^{\infty }\frac{(-1)^n\ell _{n}(x;v)}{n!(1-\frac{x}{2})^{n+1}}
	\int\limits_{0}^{\infty
	}u^{n}e^{-u}du=(-1)^v\sum_{m_1,\ldots,m_{v-1}=0}^{\infty}\frac{\prod\limits_{j=1}^{v-1}\frac{1}{j^{m_j}}}{(m_1+m_2+\ldots +m_{v-1}+\frac{x}{2})^{v+1}}\int\limits_{0}^{\infty }y^{v}e^{-y}dy.
\end{equation*}%
Since%
\begin{equation*}
	\int\limits_{0}^{\infty }y^{v}e^{-y}dy=\Gamma (v+1)=v!,
\end{equation*}%
we obtain%
 the
following theorem:

\begin{theorem}
	Let $v\in \mathbb{N}$ be with $v\geq 2$. Assuming that $\left\vert 1-\frac{x}{2}\right\vert
	>1$. Then%
	\begin{equation}
		\sum_{m_1,\ldots,m_{v-1}=0}^{\infty}\frac{\prod\limits_{j=1}^{v-1}\frac{1}{j^{m_j}}}{(m_1+m_2+\ldots +m_{v-1}+\frac{x}{2})^{v+1}}=\frac{1}{v!}\sum_{n=0}^{\infty }(-1)^{n+v}\frac{\ell _{n}(x;v)}{\left(1-\frac{x}{2}\right)^{n+1}},  \label{PD-4}
\end{equation}
where $\frac{x}{2} \notin\mathbb Z_0^{-}$.
\end{theorem}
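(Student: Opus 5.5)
The plan is to apply the Laplace transform to the generating function (\ref{dp7}), following the computation that precedes the statement. I will just make each step precise and keep track of signs and normalizations.

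\emph{Step 1: set up the identity to be integrated.} Replace $w$ by $-w$ in (\ref{dp7}), with $w>0$. Then $e^{w}$ becomes $e^{-w}$, and each factor $(1-je^{w})^{-1}$ has to be expanded in a convergent geometric series. To land on the expansion written just before the theorem,
\[
w^{v}e^{(x-v)w}\prod_{j=1}^{v-1}\sum_{m_j\ge 0} j^{m_j}e^{-m_jw},
\]
I will instead work directly from (\ref{gs1}) with $w\mapsto -w$. There the factors are $(1-\frac1j e^{-w})^{-1}=\sum_{m_j\ge0} j^{-m_j}e^{-m_jw}$, which converge for $w>0$. This gives
\[
\sum_{n\ge0}\ell_n(x;v)\frac{(-w)^n}{n!}=\frac{(-w)^{v}e^{-(x-1)w}}{(-1)^{v-1}(v-1)!}\sum_{m_1,\ldots,m_{v-1}\ge 0}\prod_{j=1}^{v-1} j^{-m_j}\,e^{-(m_1+\cdots+m_{v-1})w}.
\]
Next, multiply both sides by $e^{-(1-\frac{x}{2})w}$, exactly as in the displayed computation. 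The exponent on the right then combines into $-(m_1+\cdots+m_{v-1}+c)w$ with an explicit constant $c$. Matching this with the $\frac{x}{2}$ in (\ref{PD-4}) fixes the normalization; I will reconcile this constant, and the factor $(v-1)!$, against the paper's version.

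\emph{Step 2: integrate term by term over $(0,\infty)$.} On the right every term is nonnegative, so Tonelli's theorem justifies the interchange whenever $\frac{x}{2}>0$, i.e.\ the shift keeps all denominators positive. Each term gives $\int_0^\infty w^{v}e^{-(M+\frac{x}{2})w}\,dw=v!/(M+\frac{x}{2})^{v+1}$ with $M=m_1+\cdots+m_{v-1}$. On the left, termwise integration gives $\int_0^\infty w^ne^{-(1-\frac x2)w}\,dw=n!/(1-\frac x2)^{n+1}$. Hence the left side becomes $\sum_n(-1)^n\ell_n(x;v)(1-\frac x2)^{-n-1}$. Dividing by $v!$ and moving the sign $(-1)^v$ across yields (\ref{PD-4}).

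\emph{Main obstacle: the left-hand interchange.} The power series in $w$ has finite radius of convergence, set by the nearest zero of the denominator of (\ref{1g2AA}), i.e.\ $|w|<\ln 2$ or the $2\pi i$ poles. So exchanging sum and integral over $(0,\infty)$ is not justified by uniform convergence. I would handle this the way Watson's lemma or Borel summation does. The Laplace transform of the analytic function is analytic in $p=1-\frac x2$, and its asymptotic expansion in $1/p$ is exactly the series on the right-hand side of (\ref{PD-4}). The hypothesis $|1-\frac x2|>1$ is the regime where the paper intends this expansion to converge. I will use growth bounds $|\ell_n(x;v)|\le C\,n!\,\rho^{-n}$ from Cauchy's estimates to see when the series converges. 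If it does not converge under the stated hypothesis alone, I will interpret (\ref{PD-4}) as an asymptotic or formal identity, and state the extra condition on $x$ needed for a genuine equality.
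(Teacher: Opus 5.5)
Your route is the paper's: Laplace-transform the generating function and integrate term by term. Your Step~1 is in fact the correct form of the paper's first display in Section~\ref{S3}. From (\ref{gs1}) with $w\mapsto -w$ the prefactor is $\frac{(-1)^v}{(-1)^{v-1}(v-1)!}=-\frac{1}{(v-1)!}$, and after multiplying by $e^{-(1-\frac x2)w}$ the exponent is $-(M+\frac x2)w$ with $M=m_1+\cdots+m_{v-1}$, so $c=\frac x2$. But Step~2 then does not give (\ref{PD-4}). Tonelli on the right produces $-\frac{v!}{(v-1)!}=-v$ times the lattice sum, so what your computation actually yields (formally) is
\[
\sum_{m_1,\ldots,m_{v-1}=0}^{\infty}\frac{\prod_{j=1}^{v-1}j^{-m_j}}{\left(M+\frac{x}{2}\right)^{v+1}}=-\frac{1}{v}\sum_{n=0}^{\infty}(-1)^{n}\frac{\ell_n(x;v)}{\left(1-\frac{x}{2}\right)^{n+1}}.
\]
Its constant is $(-1)^{v+1}(v-1)!$ times the stated $\frac{(-1)^v}{v!}$, and this factor is $\neq 1$ for every $v\ge 2$. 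Your step ``dividing by $v!$ and moving the sign $(-1)^v$ across'' silently discards the factor $\frac{1}{(-1)^{v-1}(v-1)!}$ from your own Step~1. The reconciliation you postponed cannot be done, because the paper's display has $(-w)^v$ where $-\frac{w^v}{(v-1)!}$ belongs. For $v=2$ everything is explicit: $\int_0^\infty e^{-(1-\frac x2)w}F(-w,x;2)\,dw=-\int_0^\infty \frac{w^2e^{-xw/2}}{1-e^{-w}}\,dw=-2\sum_{m\ge 0}(m+\frac x2)^{-3}$. Its termwise expansion is exactly the left side of the Corollary, which the paper sets equal to $+2\zeta(3,\frac x2)$.

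The term-by-term step on the left is not merely unjustified; no hypothesis on $x$ can rescue it. The numbers $\ell_n(x;v)/n!$ are the Taylor coefficients of $F(w,x;v)$. Its nearest singularities are simple poles with nonzero residue: at $w=\ln 2$ for $v\ge 3$, and at $w=\pm 2\pi i$ for $v=2$ (where $\ell_n(x;2)=nB_{n-1}(x-1)$). Hence $\limsup_n|\ell_n(x;v)/n!|^{1/n}=1/R$ with $R=\ln 2$ or $2\pi$. The terms $\ell_n(x;v)(1-\frac x2)^{-n-1}$ are therefore unbounded, and the right side of (\ref{PD-4}) diverges for every $x$. So no ``extra condition on $x$'' gives a genuine equality, and $|1-\frac x2|>1$ is not a convergence regime at all. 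For real $x$ it even contradicts the condition $0<\frac x2<1$ that Step~2 needs for both families of Laplace integrals. Your Watson-lemma fallback must also be set up differently. With $p=1-\frac x2$ tied to $x$, which also sits in the integrand, an ``asymptotic expansion in $1/p$'' has no meaning.

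What the argument does prove is twofold. First, for $\Re x>0$ there is the exact identity $\sum_{m}\prod_j j^{-m_j}(M+\frac x2)^{-v-1}=-\frac{1}{v}\int_0^\infty e^{-(1-\frac x2)w}F(-w,x;v)\,dw$. Second, after decoupling the Laplace variable, $\sum_{m}\prod_j j^{-m_j}(M+p+x-1)^{-v-1}\sim -\frac{1}{v}\sum_n(-1)^n\ell_n(x;v)p^{-n-1}$ as $p\to+\infty$; equivalently, the lattice sum is $-\frac1v$ times the Borel sum of that divergent series. The theorem as stated asserts an equality with a convergent right-hand side and constant $\frac{(-1)^v}{v!}$. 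Neither your argument nor the paper's appeal to ``uniform convergence'' can establish it, because as stated it is false.
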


Note that there are many applications of Eq. (\ref{PD-4}). To
compute the values of the zeta functions involving the Riemann zeta
function, the Hurwitz zeta function, the Lerch zeta function, etc, there are
many different methods and techniques. We now give. For instance, putting $%
v=2$ in Eq. (\ref{PD-4}), we get the following result:

\begin{corollary}
	Assuming that $\left\vert 1-\frac{x}{2}\right\vert
	>1$. Then
	\begin{equation*}
		\sum_{n=0}^{\infty }(-1)^{n}\frac{\ell _{n}(x;2)}{\left(1-\frac{x}{2}\right)^{n+1}}=2\zeta\left(3,\frac{x}{2}\right),
	\end{equation*}
	where $\frac{x}{2} \notin\mathbb Z_0^{-}$.
\end{corollary}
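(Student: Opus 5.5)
The corollary should be the specialization $v=2$ of Eq. (\ref{PD-4}). The plan is to substitute $v=2$ there, identify the left-hand side as a Hurwitz zeta value, simplify the right-hand side, and clear the factor $1/v!$.

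\emph{Left-hand side.} For $v=2$ the multiple sum $\sum_{m_1,\ldots,m_{v-1}=0}^{\infty}$ reduces to a single sum over $m_1\in\mathbb{N}_0$. The weight $\prod_{j=1}^{v-1}j^{-m_j}$ reduces to $1^{-m_1}=1$. So the left-hand side of (\ref{PD-4}) becomes
\begin{equation*}
\sum_{m_1=0}^{\infty}\frac{1}{\left(m_1+\frac{x}{2}\right)^{3}}=\Phi\left(3;1,\frac{x}{2}\right)=\zeta\left(3,\frac{x}{2}\right).
\end{equation*}
Here I use the convention $\zeta(w,x):=\Phi(w;1,x)$ from the introduction, with the sum starting at $l=0$. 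This is the reading needed for the identity, since the displayed definition with $v$ starting at $1$ would drop the $m_1=0$ term. The hypothesis $\frac{x}{2}\notin\mathbb{Z}_0^{-}$ makes every term finite. Since $\Re(3)>1$, the series converges absolutely.

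\emph{Right-hand side.} With $v=2$ we have $1/v!=1/2$ and $(-1)^{n+v}=(-1)^{n+2}=(-1)^n$. The right-hand side of (\ref{PD-4}) therefore becomes
\begin{equation*}
\frac{1}{2}\sum_{n=0}^{\infty}(-1)^{n}\frac{\ell_n(x;2)}{\left(1-\frac{x}{2}\right)^{n+1}}.
\end{equation*}
This series converges under the standing assumption $\left|1-\frac{x}{2}\right|>1$, which is carried over unchanged from the theorem. Equating the two sides and multiplying by $2$ gives the stated formula.

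\emph{Main point to check.} The only step needing care is that $v=2$ lies within the range of the theorem; it does, since the theorem assumes $v\geq 2$. The other point is the indexing convention for $\zeta(3,\frac{x}{2})$ discussed above. Everything else is direct substitution. As an optional consistency check, one can compare with Remark 2, which gives $\ell_{n+2}(x;2)=(n+2)B_{n+1}(x-1)$, together with $\ell_0(x;2)=\ell_1(x;2)=0$ (the generating function has a factor $w^2$).
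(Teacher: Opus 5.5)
Your route is the paper's own: there the corollary is obtained simply by putting $v=2$ in (\ref{PD-4}). You carry out the specialization correctly: weight $1^{-m_1}=1$, $1/v!=1/2$, $(-1)^{n+2}=(-1)^n$, and $\zeta(3,\frac{x}{2})=\Phi(3;1,\frac{x}{2})$ with the sum starting at $l=0$. The genuine gap is your sentence that the series ``converges under the standing assumption $|1-\frac{x}{2}|>1$''. It diverges for every $x$, so the identity cannot hold as written, and the defect comes from (\ref{PD-4}) itself. From $F(w,x;2)=\frac{w^{2}e^{(x-1)w}}{e^{w}-1}=w\cdot\frac{we^{(x-1)w}}{e^{w}-1}$ one gets $\ell_0(x;2)=0$ and $\ell_n(x;2)=nB_{n-1}(x-1)$ for $n\geq 1$. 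Now $F(\cdot,x;2)$ has genuine simple poles at $w=\pm 2\pi i$, since the numerator $w^{2}e^{(x-1)w}$ does not vanish there. Hence its Taylor series at $0$ has radius exactly $2\pi$, and $\limsup_{n}|\ell_n(x;2)/n!|^{1/n}=\frac{1}{2\pi}$. It follows that $|\ell_n(x;2)|^{1/n}$ is unbounded, so the terms $\ell_n(x;2)/(1-\frac{x}{2})^{n+1}$ do not tend to $0$, however large $|1-\frac{x}{2}|$ is. The step that breaks in the derivation of (\ref{PD-4}) is interchanging $\sum_n$ with $\int_0^\infty$ for a power series of finite radius of convergence; what comes out is at best a formal (Borel-type) expansion. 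Moreover, that derivation imposes $1-\frac{x}{2}>0$ and $\frac{x}{2}>0$, which force $0<1-\frac{x}{2}<1$, incompatible with $|1-\frac{x}{2}|>1$.

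Even formally the sign is wrong. For $w>0$,
\[
e^{-(1-\frac{x}{2})w}F(-w,x;2)=\frac{w^{2}e^{-\frac{x}{2}w}}{e^{-w}-1}=-w^{2}e^{-\frac{x}{2}w}\sum_{m=0}^{\infty}e^{-mw},
\]
whereas the display preceding (\ref{PD-4}) has $+w^{2}$ there. For general $v$ the correct prefactor is $-w^{v}/(v-1)!$, not $(-w)^{v}$. What is actually true is the integral identity $\int_0^\infty e^{-(1-\frac{x}{2})w}F(-w,x;2)\,dw=-2\zeta(3,\frac{x}{2})$ for $\Re(x)>0$, which is the classical integral representation of the Hurwitz zeta function. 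The series in the corollary is only its divergent term-by-term expansion, and the correct formal statement has $-2\zeta(3,\frac{x}{2})$ on the right. Your optional consistency check would have exposed this, but it contains a slip: $\ell_1(x;2)=B_0(x-1)=1$, not $0$, because $\prod_{j=0}^{1}(e^{w}-j)=e^{w}(e^{w}-1)$ vanishes only to first order at $w=0$. Also, $\ell_{n+2}(x;2)=(n+2)B_{n+1}(x-1)$ visibly grows factorially.
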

Therefore, we show that Eq. (\ref{PD-4}) can be given in terms of the Hurwitz Lerch zeta function by the following theorem:

\begin{theorem}
	Let $v\in \mathbb{N}$ be with $v\geq 3$. Assuming that $\left\vert 1-\frac{x}{2}\right\vert
	>1$. Then%
	\begin{equation}
		\sum_{m_1,\ldots,m_{v-2}=0}^{\infty}\prod\limits_{j=1}^{v-2}\frac{1}{j^{m_j}}\Phi \left(v+1,\frac{1}{v-1},m_1+m_2+\ldots +m_{v-2}+\frac{x}{2}\right)=\sum_{n=0}^{\infty }\frac{(-1)^{n+v}v!\ell _{n}(x;v)}{\left(1-\frac{x}{2}\right)^{n+1}},
		 \label{PD-4x}
	\end{equation}
	where $\frac{x}{2} \notin\mathbb Z_0^{-}$.
\end{theorem}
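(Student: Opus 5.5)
The plan is to start from the previous theorem, Eq. (\ref{PD-4}), and rewrite its left-hand side by isolating the innermost summation over $m_{v-1}$. This is the same regrouping that produced Eq. (\ref{ky}) from the $(v-1)$-fold series representation of $Z(s;x,v)$.

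\emph{Step 1: split off $m_{v-1}$.} For fixed $m_1,\ldots,m_{v-2}\in\mathbb{N}_0$, set
\[
a=m_1+\cdots+m_{v-2}+\frac{x}{2}.
\]
Since $\frac{x}{2}\notin\mathbb{Z}_0^{-}$, we also have $a\notin\mathbb{Z}_0^{-}$. The inner sum is then
\[
\sum_{m_{v-1}=0}^{\infty}\frac{(1/(v-1))^{m_{v-1}}}{(m_{v-1}+a)^{v+1}}=\Phi\left(v+1;\frac{1}{v-1},a\right),
\]
by definition (\ref{LZ}). For $v\geq 3$ we have $|1/(v-1)|<1$, so this series converges absolutely. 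The remaining weights $\prod_{j=1}^{v-2}j^{-m_j}$ factor out of the inner sum.

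\emph{Step 2: justify the regrouping.} Write $\sigma=\Re(x/2)$. Each term of the full $(v-1)$-fold series is bounded by
\[
\prod_{j=1}^{v-1} j^{-m_j}\,\bigl|m_1+\cdots+m_{v-1}+\tfrac{x}{2}\bigr|^{-(v+1)}.
\]
The sum of these bounds is dominated by the number of lattice points on each level $m_1+\cdots+m_{v-1}=M$, which is $O(M^{v-2})$, divided by $|M+x/2|^{v+1}$. This is summable. Hence the multiple series converges absolutely, and Fubini/Tonelli for series lets us sum over $m_{v-1}$ first. The left-hand side of (\ref{PD-4}) therefore equals
\[
\sum_{m_1,\ldots,m_{v-2}=0}^{\infty}\prod_{j=1}^{v-2}\frac{1}{j^{m_j}}\,\Phi\left(v+1;\frac{1}{v-1},m_1+\cdots+m_{v-2}+\frac{x}{2}\right).
\]

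\emph{Step 3: multiply through.} Multiplying (\ref{PD-4}) by $v!$ turns its right-hand side into
\[
\sum_{n=0}^{\infty}\frac{(-1)^{n+v}v!\,\ell_n(x;v)}{\left(1-\frac{x}{2}\right)^{n+1}},
\]
which is the right-hand side of (\ref{PD-4x}). The left-hand side then also acquires a factor $v!$ in front of the $\Phi$-series of Step 2. Either the intended identity carries this $v!$ on the $\Phi$ side, or the displayed normalization of (\ref{PD-4x}) should be read with the $v!$ moved accordingly.

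The main obstacle is therefore bookkeeping rather than analysis. The absolute convergence in Step 2 is routine. The point needing care is that the constant $v!$ is consistent between (\ref{PD-4}) and (\ref{PD-4x}). I expect to check this against the case $v=3$, and against the corollary for $v=2$ by analogy. In that check I would keep the factor $\frac{1}{v!}$ of (\ref{PD-4}) as the reference normalization.
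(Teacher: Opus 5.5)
Your Steps 1 and 2 follow the paper's route. The paper obtains \eqref{PD-4x} from \eqref{PD-4} by summing the innermost $m_{v-1}$-series into $\Phi\left(v+1;\frac{1}{v-1},\cdot\right)$. Your Tonelli argument via the $O(M^{v-2})$ count of lattice points on each level correctly justifies that regrouping. The proof breaks in Step 3. Multiplying \eqref{PD-4} by $v!$ cancels its factor $\frac{1}{v!}$ and gives $\sum_{n}(-1)^{n+v}\ell_n(x;v)\left(1-\frac{x}{2}\right)^{-n-1}$; it does not create a $v!$ in the numerator. Granting \eqref{PD-4}, what Steps 1--2 actually give is
\[
\sum_{m_1,\ldots,m_{v-2}=0}^{\infty}\prod_{j=1}^{v-2}\frac{1}{j^{m_j}}\,\Phi\left(v+1;\frac{1}{v-1},m_1+\cdots+m_{v-2}+\frac{x}{2}\right)=\frac{1}{v!}\sum_{n=0}^{\infty}\frac{(-1)^{n+v}\ell_n(x;v)}{\left(1-\frac{x}{2}\right)^{n+1}}.
\]
This differs from \eqref{PD-4x} by the factor $(v!)^2$, not $v!$. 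So \eqref{PD-4x} as displayed is not a consequence of \eqref{PD-4}; the two cannot both hold unless both sides vanish. Reading the normalization ``with the $v!$ moved'' replaces the statement rather than proving it. The paper's own one-line passage from \eqref{PD-4} to \eqref{PD-4x} has exactly this defect.

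Beyond the constant, taking \eqref{PD-4} for granted does not give a valid proof, because the real obstacle is analytic: the right-hand side diverges. For $v\geq 3$, $F(w,x;v)$ has a simple pole at $w=\ln 2$ (a zero of $e^{w}-2$), whose residue is a nonzero multiple of $2^{x}$. This is its unique singularity of smallest modulus. Hence $\ell_n(x;v)=C\,n!\,(\ln 2)^{-n}(1+o(1))$ with $C\neq 0$, so the terms $\ell_n(x;v)\left(1-\frac{x}{2}\right)^{-n-1}$ do not tend to $0$ for any $x$.

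The derivation of \eqref{PD-4} integrates the Maclaurin series of $F(-w,x;v)$ termwise over $(0,\infty)$, although that series converges only for $|w|<\ln 2$. It also assumes $0<x<2$, which for real $x$ contradicts $\left|1-\frac{x}{2}\right|>1$. Even as a formal computation the constant is off. From $F(-w,x;v)=-\frac{w^{v}e^{-(x-1)w}}{(v-1)!\prod_{j=1}^{v-1}\left(1-\frac{1}{j}e^{-w}\right)}$ one gets $-\frac{1}{v}$ in front of $\sum_{n}(-1)^{n}\ell_n(x;v)\left(1-\frac{x}{2}\right)^{-n-1}$, not $\frac{(-1)^{v}}{v!}$. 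So no placement of $v!$ turns \eqref{PD-4x} into a true identity between convergent series.

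What your regrouping argument does prove rigorously is the $\Phi$-series identity with right-hand side $\frac{1}{v!}\int_{0}^{\infty}\frac{w^{v}e^{-\frac{x}{2}w}}{\prod_{j=1}^{v-1}\left(1-\frac{1}{j}e^{-w}\right)}\,dw$, valid for $\Re(x)>0$. You get it by integrating the geometric expansions in $e^{-w}$, not the Taylor series in $w$, term by term.
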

\section*{Funding}
There is no funding for this manuscript

\section*{Data availability}
No data were used in the conduct of this manuscript.

\section*{Declarations}
\textbf{Conflict of interest} No potential Conflict of interest was reported by the authors.

\end{document}